%
%
%
\documentclass[graybox]{svmult}
\usepackage{amssymb}
\usepackage{amsfonts}
\usepackage{amsmath}
\usepackage{tikz}
\usepackage{mathptmx}       
\usepackage{helvet}         
\usepackage{courier}        
\usepackage{makeidx}         
\usepackage{graphicx}        
\usepackage{multicol}        
\usepackage[bottom]{footmisc}
\makeindex             


\def\Ucal{\mathcal{U}}

\def\Xcal{\mathcal{X}}
\def\Zcal{\mathcal{Z}}

\renewcommand{\to}{\rightarrow}

\newcommand{\spn}{\operatorname{Span}}

\newcommand{\grad}{\boldsymbol{\nabla}}

\newcommand{\FF}{\mathbb{F}}

\newcommand{\R}{\mathbb{R}}

\begin{document}
\title*{Johnson-Segalman -- Saint-Venant equations
for viscoelastic shallow 
flows in the 
elastic limit}
\titlerunning{Riemann problems for Johnson-Segalman -- Saint-Venant fluids}
\author{S\'ebastien Boyaval}
\institute{S\'ebastien Boyaval \at 
Laboratoire d'hydraulique Saint-Venant (Ecole des Ponts ParisTech -- EDF R\& D -- CEREMA) Universit\'e Paris-Est, EDF'lab 6 quai Watier 78401 Chatou Cedex France, 
\& INRIA Paris MATHERIALS 
\email{sebastien.boyaval@enpc.fr}}
%
%
\maketitle
\abstract{The shallow-water equations of Saint-Venant, 
often used to model the long-wave dynamics of free-surface 
flows driven 
by inertia and hydrostatic pressure, can be generalized to account for the elongational rheology of non-Newtonian fluids too.
We consider here the $4\times 4$ shallow-water equations generalized to \emph{viscoelastic} fluids 
using the Johnson-Segalman model 
in the elastic limit (i.e. at infinitely-large Deborah number, when source terms vanish).
The 
system of nonlinear first-order equations is hyperbolic when the \emph{slip parameter} is small $\zeta\le\frac12$ 
($\zeta=1$ is the corotational case and $\zeta=0$ the upper-convected Maxwell case). 
Moreover, it is naturally endowed with a mathematical entropy 
(a physical free-energy). 
When $\zeta\le\frac12$ and
for any initial data excluding vacuum,
we construct here, when elasticity $G>0$ is non-zero,
the unique solution to the Riemann problem under Lax admissibility conditions.
The standard Saint-Venant case is recovered when $G\to0$ for small 
data.}
\section{Setting of the problem} 
\sectionmark{Setting}
\label{sec:pb}
The well-known one-dimensional 
shallow-water equations 
of Saint-Venant
\begin{eqnarray}
\label{eq:saintvenant1}
\partial_{t}h + \partial_{x}(h u) = 0
\\
\partial_{t}(h u) + \partial_{x}\left( h u^2 + gh^2/2 \right) = 0
\label{eq:saintvenant2}
\end{eqnarray}
model the dynamics of the mean depth $h(t,x)>0$ 
of a perfect 
fluid flowing with mean velocity $u(t,x)$
on 
a flat open channel with uniform cross section along a straight axis $\vec{e}_x$,
under gravity (perpendicular to $\vec{e}_x$, with constant magnitude $g$). 

Now, following the interpretation of (\ref{eq:saintvenant1}--\ref{eq:saintvenant2}) as an approximation of 
the depth-averaged Free-Surface Navier-Stokes (FSNS)
equations governing Newtonian fluids, 
and starting depth-averageing from the 
FSNS/Upper-Convected-Maxwell(UCM) system of 
equations for 
(linear) \emph{viscoelastic} fluids \cite{bouchut-boyaval-2013,bouchut-boyaval-2015},
one can in fact derive 
a \emph{generalized Saint-Venant} (gSV) system of shallow-water-type equations
\begin{eqnarray}
\label{eq:gsv1}
\partial_{t}h + \partial_{x}(h u) & = & 0
\\
\partial_{t}(h u) + \partial_{x}\left( h u^2 + gh^2/2 + h N \right) & = & 0
\label{eq:gsv2}
\end{eqnarray}
where the 
normal stress difference term in the momentum balance 
$N= \tau_{zz}-\tau_{xx}$ 
is function of additional extra-stress variables $\tau_{zz}(t,x),\tau_{xx}(t,x)$ governed by e.g.
\begin{eqnarray}
\label{eq:gsv3}
\tau_{xx} + \lambda ( \partial_{t}\tau_{xx} + u\partial_{x}\tau_{xx}-2\tau_{xx}\partial_{x}u ) & = & \nu\partial_{x}u
\\
\tau_{zz} + \lambda ( \partial_{t}\tau_{zz} + u\partial_{x}\tau_{zz}+2\tau_{zz}\partial_{x}u ) & = & -\nu\partial_{x}u
\label{eq:gsv4}
\end{eqnarray}
i.e. depth-averaged UCM equations modelling 
\emph{elongational} viscoelastic effects.

When the relaxation time is small $\lambda\to0$ 
(i.e. the Deborah number, when $\lambda>0$ is non-dimensionalized with respect to a time scale characteristic of the flow)
the system (\ref{eq:gsv1}-\ref{eq:gsv2}-\ref{eq:gsv3}-\ref{eq:gsv4}) 
converges to standard viscous Saint-Venant equations with 
viscosity $\nu\ge0$.
When the relaxation time 
{\it and the viscosity} are equivalently large $\lambda\sim\nu\to+\infty$,
the system (\ref{eq:gsv1}--\ref{eq:gsv4}) 
converges to elastic Saint-Venant equations (in Eulerian formulation, see e.g. \cite{picasso-2016})
with elasticity 
$G=\nu/(2\lambda)\ge0$,
which coincides with the homogeneous version of the system (\ref{eq:gsv7}--\ref{eq:gsv8}) (i.e. when the souce term vanish)
\begin{eqnarray} 
%
\label{eq:gsv7}
\partial_{t}\sigma_{xx} + u\partial_{x}\sigma_{xx}-2\sigma_{xx}\partial_{x}u & = & ({1-\sigma_{xx}})/\lambda
\\
\label{eq:gsv8}
\partial_{t}\sigma_{zz} + u\partial_{x}\sigma_{zz}+2\sigma_{zz}\partial_{x}u & = & ({1-\sigma_{zz}})/\lambda
\end{eqnarray}
obtained after rewriting (\ref{eq:gsv3}--\ref{eq:gsv4})
using 
$N=G(\sigma_{zz}-\sigma_{xx})$, 
and
$\tau_{xx,zz} = G(\sigma_{xx,zz}-1)$.


More general evolution equations of differential rate-type for the extra-stress,
the Johnson-Segalman (JS) equations with slip parameter $\zeta\in[0,2]$,
can also be coupled to FSNS before depth-averageing. 
In fact (\ref{eq:gsv7}--\ref{eq:gsv8}) 
arise in the specific case $\zeta=0$ (upper-convected Gordon-Schowalter derivative)
for gSV 
system (\ref{eq:gsv1}--\ref{eq:gsv2}--\ref{eq:gsv9}--\ref{eq:gsv10}) 
\begin{eqnarray}
\label{eq:gsv9}
\partial_{t}\sigma_{xx} + u\partial_{x}\sigma_{xx} +2(\zeta-1)\sigma_{xx}\partial_{x}u & = & ({1-\sigma_{xx}})/\lambda
\\
\label{eq:gsv10}
\partial_{t}\sigma_{zz} + u\partial_{x}\sigma_{zz} +2(1-\zeta)\sigma_{zz}\partial_{x}u & = & ({1-\sigma_{zz}})/\lambda
\end{eqnarray}
that accounts for 
{\it linear}
viscoelastic elongational effects 
standardly established for e.g. polymeric liquids \cite{bird-curtiss-armstrong-hassager-1987a}.
The gSV system with JS 
is 
already
an interesting starting point 
for mathematical studies,
although 
it could 
still be further complicated to account for more established physics ; we refer to \cite{bird-curtiss-armstrong-hassager-1987a} for details. 


In the following, we consider the Cauchy problem on $t\ge0$ for the quasilinear 
gSV system (\ref{eq:gsv1}--\ref{eq:gsv2}--\ref{eq:gsv9}--\ref{eq:gsv10}) 
when it is supplied by an initial condition with bounded total variation.
Weak solutions 
with 
bounded variations (BV) on $\R\ni x$
can be constructed 
for 
quasilinear systems 
provided the system is {\it strictly hyperbolic},
in particular when characteristic fields are 
genuinely nonlinear or linearly degenerate 
\cite{dafermos-2000,lefloch-2002}.

First, we show that gSV is hyperbolic for all $h\ge0,\sigma_{xx}>0,\sigma_{zz}>0$ provided $\zeta\le\frac12$~;
\emph{strictly} provided $h>0$, or 
$G>0$ and, $\zeta>0$ or $\sigma_{xx}\neq\sigma_{zz}$. 
Next, assuming $\zeta\le\frac12$ and $h>0$,
we construct univoque 
gSV solutions 
guided by the 
dissipation~rule
\begin{equation}
\label{secondprinciple}
 \partial_{t}F 
+ \partial_{x}\left(u(F + P 
  )\right) \le 
  G\left( 2-\sigma_{xx}-\sigma_{xx}^{-1} + 2-\sigma_{zz}-\sigma_{zz}^{-1} \right)/(2\lambda)
\end{equation}
for the same mathematical 
entropy $F$ 
as for 
$\zeta=0$ \cite{bouchut-boyaval-2013}
as admissibility criterion
\begin{equation}
\label{freeenergy}
F = h\left(u^2 + gh +  G(\sigma_{xx}+\sigma_{zz}-\ln\sigma_{xx}-\ln\sigma_{zz}-2) \right)/2 \,,
\end{equation}
denoting $P=gh^2/2+hN$.
Smooth gSV solutions obviously satisfy the \emph{equality} \eqref{secondprinciple}.
When $h,\sigma_{xx},\sigma_{zz}>0$,
gSV 
reads as a system of conservation laws
rewriting (\ref{eq:gsv9}--\ref{eq:gsv10}) 
\begin{eqnarray} 
\label{eq:gsv9bis}
\partial_{t}\left(h\log(h^{2(1-\xi)}\sigma_{xx})\right) + \partial_{x}\left(hu\log(h^{2(1-\xi)}\sigma_{xx})\right) & = & h(\sigma_{xx}^{-1}-1)/\lambda ,
\\
\label{eq:gsv10bis}
\partial_{t}\left(h\log(h^{2(\xi-1)}\sigma_{zz})\right) + \partial_{x}\left(hu\log(h^{2(\xi-1)}\sigma_{zz})\right) & = & h(\sigma_{zz}^{-1}-1)/\lambda .
\end{eqnarray}
But 
whereas 
$F$ 
is convex in 
e.g. $(h,hu,h\sigma_{xx},h\sigma_{zz})$,
see \cite{bouchut-boyaval-2013} when $\xi=0$,
it 
cannot be convex with respect to any 
variable 
$V = (h,hu,h\Xcal(\sigma_{xx}h^{2(1-\zeta)}),h \Zcal(\sigma_{zz}h^{2(\zeta-1)}))$ 
using smooth $\Xcal,\Zcal\in C^1(\R_\star^+,\R_\star^+)$ 
such that the system rewrites
\begin{equation}
\label{conservative}
\partial_t V + \partial_x \FF(V) = 0\,,
\end{equation}
$
\FF(V) = (hu,hu^2+ 
gh^2/2+Gh\sigma_{zz}-Gh\sigma_{xx}
,hu\Xcal(\sigma_{xx}h^{2(1-\zeta)}),hu\Zcal(\sigma_{zz}h^{2(\zeta-1)})) 
\,.
$
Now, whereas
{\it univoque} solutions to quasilinear (possibly non-conservative) systems
can be constructed 
using (convex) entropies 
\cite{lefloch-2002},
conservative formulations alone (without admissibility criterion) 
are not enough. 
%
This is why 
we carefully investigate the building-block of univoque BV solutions: 
univoque solutions to Riemann initial-value problems
for a quasilinear system \eqref{evolution}
in well-chosen variable~$U$
\begin{equation}
\label{evolution}
\partial_t U + A(U) \partial_x U = S(U)
\end{equation}
in the homogeneous case $S\equiv0$ (obtained in the limit $\lambda\to\infty$).
Precisely, when $\zeta\le\frac12$ and $G>0$
we build 
the unique weak solutions $U(t,x)$ 
{\it admissible under Lax condition}
to 
Riemann problems for (\ref{evolution}) 
with piecewise-constant initial conditions 
\begin{equation}
\label{initial}
U(t\to0^+,x)=
\begin{cases}
U_l & x<0 
\\
U_r & x>0 
\end{cases}
\end{equation}
given \emph{any} states $U_l,U_r\in\Ucal$ in the strict hyperbolicity region $\Ucal=\{h>0,\sigma_{xx}>0,\sigma_{zz}>0\}\subset\R^d$.
%
Our Riemann 
solutions 
satisfy the conservative system (\ref{conservative})
in the 
distributional sense on $(t,x)\in\R^+\times\R$ 
and are consistent with the standard Saint-Venant case when $G\to0$.
%
%
%
%
These Riemann solutions are a key tool to
define 
weak BV solutions to the Cauchy problem for 
gSV 
which are 
unique within the admissible 
BV solutions'class modulo some 
restriction on oscillations, see e.g. \cite[Chap.X]{lefloch-2002}.
%

However, note that 
the vacuum state $h=0$ 
shall never be reached 
as a limit state 
by any sequence of admissible Riemann solutions when $G>0$,
as opposed to the standard Saint-Venant case $G=0$ 
(like in the famous Ritter problem for instance).
%
%
This is in fact related to well-posedness in the large (i.e. for any initial condition $U_l,U_r\in\Ucal$) when $G>0$,
as opposed to the standard Saint-Venant 
case $G=0$ (so the latter case is some kind of singular limit):
when $G>0$, gSV impulse blows up as $h\to0$, so vacuum cannot be reached, 
while Hugoniot curves in turn span the whole range $\Ucal$.
Also, consistently with the occurence of vacuum when $G=0$ (standard Saint-Venant),
the latter case can be recovered when $G\to0$ only for \emph{small} initial data
(otherwise, the intermediate state in Riemann solution blows up).
%
\section{Hyperbolic structure of the system of equations}
\sectionmark{Hyperbolic structure}
\label{sec:hyp}
Given $g>0,G\ge0$, consider first the gSV system (\ref{eq:gsv1}--\ref{eq:gsv2}--\ref{eq:gsv9}--\ref{eq:gsv10}) 
written in the non-conservative quasilinear form
(\ref{evolution}) 
using the variable $U=(h,u,\sigma_{xx},\sigma_{zz})\in\Ucal$.
One easily sees that $\lambda^0:=u$ is an eigenvalue with multiplicity two for the jacobian matrix $A$
associated with the linearly degenerate 
$0$-characteristic field 
(i.e. $r^0\cdot\grad_U \lambda^0=0$) 
\begin{equation}
r^0 \in \spn\{r^0_1,r^0_2\} \quad
r^0_1 :=
\begin{pmatrix}
 Gh \\ 0 \\ (gh + N) \\ 0
\end{pmatrix}
\
r^0_2 :=
\begin{pmatrix}
 Gh \\ 0 \\ 0 \\ -(gh + N) 
\end{pmatrix}
\end{equation}
with Riemann invariants $u,P$ (i.e. $r^0\cdot\grad_U P=0$).
Moreover, as long as $\zeta\le\frac12$, holds 
\begin{equation}
\label{dhp}
\partial_h P|_{\sigma_{xx}h^{2(1-\zeta)},\sigma_{zz}h^{2(\zeta-1)}} 
= gh+G(\sigma_{zz}-\sigma_{xx})+2G(1-\zeta)(\sigma_{zz}+\sigma_{xx})>0
\end{equation}
for $h,\sigma_{xx},\sigma_{zz}>0$ so, after 
computations, the two other 
eigenvalues of $A$ are real
\begin{equation}
\lambda^\pm := u \pm \sqrt{ \partial_h P|_{\sigma_{xx}h^{2(1-\zeta)},\sigma_{zz}h^{2(\zeta-1)}} }
\end{equation}
and define two genuinely nonlinear fields (denoted by $+$ and $-$) spanned by
\begin{equation}
r^\pm :=
\begin{pmatrix}
 h \\ \pm \sqrt{ \partial_h P|_{\sigma_{xx}h^{2(1-\zeta)},\sigma_{zz}h^{2(\zeta-1)}} } \\ 2(\zeta-1)\sigma_{xx} \\ 2(1-\zeta)\sigma_{zz}
\end{pmatrix}
\end{equation}
with $\sigma_{xx}h^{2(1-\zeta)},\sigma_{zz}h^{2(\zeta-1)}$ 
as Riemann invariants ; note in particular for $\zeta\in[0,\tfrac12]$ 
{
\begin{equation}
\label{genuinelynonlinear}
r^\pm\cdot\grad_U \lambda^\pm = \pm
\frac{ 3gh + 2G(3-2\zeta)(2-\zeta)\sigma_{zz} + 2G\zeta(1-2\zeta)\sigma_{xx} }
{ 2\sqrt{\partial_h P|_{\sigma_{xx}h^{2(1-\zeta)},\sigma_{zz}h^{2(\zeta-1)}}} } \gtrless 0
\,.
\end{equation}
}
Univoque 
piecewise-smooth solutions 
to Cauchy-Riemann problems for (\ref{eq:gsv1}--\ref{eq:gsv2}--\ref{eq:gsv9}--\ref{eq:gsv10})
read $U(t,x)=\tilde U(x/t)$ 
with $\tilde U(\xi)$ piecewise differentiable solution on $\R\ni\xi$~to 
\begin{equation}
\label{eq:selfsimilar} 
\xi \tilde U' = A(\tilde U) \tilde U'
\qquad
\tilde U \underset{\xi\to-\infty}{\longrightarrow} U_l \,,\quad
\tilde U \underset{\xi\to+\infty}{\longrightarrow} U_r 
\end{equation}
having finitely-many 
discontinuities $\xi_m$, $m=0\ldots M$,
shall next be constructed 
for any 
initial condition $U_l,U_r\in\Ucal$
using 
elementary waves satisfying
$\tilde U' \in \spn{r^\pm}$, 
$\xi=\lambda^\pm$ 
therefore 
$
\tilde U'= 
{r^\pm}/({r^\pm\cdot\grad_U \lambda^\pm}) 
$,
or
$\tilde U'=0$, 
and 
an 
admissibility criterion. 

\section{Elementary-waves solutions 
}\sectionmark{Elementary waves} 
For all $U_l,U_r\in\Ucal$, unique 
solutions to (\ref{evolution}--\ref{initial}) shall be constructed in the form
\begin{equation}
\label{solution}
\tilde U(\xi)=
\begin{cases}
U_l \equiv \tilde U_0 & \phantom{\xi_0<}\xi<\xi_0 
\\
\tilde U_1(\xi) & \xi_0<\xi<\xi_1 
\\
\cdots
\\
\tilde U_M(\xi) & \xi_{M-1}<\xi<\xi_M 
\\
U_r \equiv \tilde U_{M+1} & \xi_M<\xi\phantom{\xi_{M+1}}
\end{cases} 
\end{equation}
using $M$ differentiable states $\tilde U_m$ 
to connect $U_l,U_r \in\Ucal$ through elementary waves.

\subsection{Contact discontinuities and shocks}
Elementary-waves solutions (\ref{solution}) with a single discontinuity ($M=1$) shall be
$0$-contact discontinuities when,
denoting $\Upsilon_l$ (resp. $\Upsilon_r$) the left (resp. right) value of $\Upsilon$,
\begin{equation} 
\xi_0=u_l=u_r \quad P_l=P_r \quad \;
\end{equation}
or $\pm$-shocks 
when, 
denoting
$
P_k = gh_k^2/2 + G Z^{-1} h_k^{1+2(1-\zeta)} - G X h_k^{1+2(\zeta-1)}
$, hold
\begin{eqnarray} 
\label{rankinehugoniot1}
& \xi_0 (h_{r}-h_l) = (h_{r}u_{r}-h_lu_l)\,,
\\
\label{rankinehugoniot2}
& \xi_0 (h_{r}u_{r}-h_lu_l) = (h_{r}u_{r}^2+P_{r}-h_lu_l^2-P_l)\,,
\end{eqnarray}
with 2 constants
$Z^{-1}=\sigma_{zz,k}h_k^{2(\zeta-1)}>0, X=\sigma_{xx,k}h_k^{2(1-\zeta)}>0$ ($k\in\{l,r\}$),
%
%
thus
\begin{equation}
\label{hugoniotlocus}
u_{r} = u_l \pm \sqrt{(h_l^{-1}-h_{r}^{-1})(P_{r}-P_l)}
\end{equation}
on combining \eqref{rankinehugoniot1}, \eqref{rankinehugoniot2}.
Both waves satisfy 
Rankine-Hugoniot (RH) relationships
\begin{equation}
\label{rh}
\xi_0 ( V_r - V_l ) = \FF_r - \FF_l 
\end{equation}
and thus define standard weak solutions to \eqref{evolution} in the conservative variable $V(t,x)=\tilde V(x/t)$.
%
%
Moreover, the entropy dissipation \eqref{secondprinciple} in the elastic limit $\lambda\to\infty$
\begin{equation}
\label{rhentropy}
E := - \xi_0 ( F_r - F_l ) +
u(F + gh^2/2 + hN )|_r - u(F + gh^2/2 + hN )|_l  \le 0
\end{equation}
can be checked for contact discontinuities (as an equality), 
%
%
and for the
\emph{weak} shocks in the genuinely nonlinear fields $\lambda^\pm$ 
(i.e. shocks with small enough amplitude)
which satisfy Lax admissibility condition, see \cite{lax-1957,dafermos-2000} and
\cite[(1.24) Chap. VI]{lefloch-2002}:
\begin{lemma}
Right and left states $V_r,V_l$ can be connected through an admissible
\begin{itemize}
 \item $-$-shock if $u_r = u_l - \sqrt{(h_l^{-1}-h_r^{-1})(P_r-P_l)}$, $h_r \ge h_l$
%
 \item $+$-shock if $u_r = u_l - \sqrt{(h_l^{-1}-h_r^{-1})(P_r-P_l)}$, $h_r \le h_l$
%
%
\end{itemize}
\end{lemma}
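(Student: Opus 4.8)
The plan is to collapse the $4\times4$ jump conditions onto a scalar shallow-water problem with a \emph{generalized pressure}, identify the two Hugoniot curves through $U_l$, and recast Lax's inequalities as a single monotonicity property of that pressure. First I would record that along any shock ($\xi_{0}\neq u_{l}$, the equality case being a contact discontinuity) the $\pm$-Riemann invariants $X:=\sigma_{xx}h^{2(1-\zeta)}$ and $Z^{-1}:=\sigma_{zz}h^{2(\zeta-1)}$ must be continuous: writing the last two components of \eqref{evolution} in conservative form \eqref{eq:gsv9bis}--\eqref{eq:gsv10bis} and combining their jump relations with \eqref{rankinehugoniot1} yields $h_{l}(\xi_{0}-u_{l})\log(X_{r}/X_{l})=0$ and its $Z$-analogue. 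Hence, with $X,Z^{-1}>0$ the common values and $P(h):=gh^{2}/2+GZ^{-1}h^{3-2\zeta}-GXh^{2\zeta-1}$ (so $P(h_{k})=P_{k}$), the surviving jump relations are exactly \eqref{rankinehugoniot1}--\eqref{rankinehugoniot2}. Introducing the mass flux $m:=h_{l}(u_{l}-\xi_{0})=h_{r}(u_{r}-\xi_{0})$, these give $m(u_{r}-u_{l})=-(P_{r}-P_{l})$ and $u_{r}-u_{l}=m(h_{r}^{-1}-h_{l}^{-1})$, hence $m^{2}=(P_{r}-P_{l})h_{l}h_{r}/(h_{r}-h_{l})$, which is $\ge0$ precisely because $P'>0$ on $\Ucal$ by \eqref{dhp}; squaring recovers \eqref{hugoniotlocus}. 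Dividing out the double zero at $h_{r}=h_{l}$ (the quotient is strictly positive on $\R_{\star}^{+}$, since $h_{l}^{-1}-h_{r}^{-1}$ and $P_{r}-P_{l}$ always share the sign of $h_{r}-h_{l}$), each branch is a smooth curve in $(h_{r},u_{r})$ for all $h_{r}>0$, and genuine nonlinearity \eqref{genuinelynonlinear} forces exactly one sign choice to be tangent at $U_{l}$ to $r^{-}$ (resp.\ $r^{+}$); comparing that slope with the $(h,u)$-slope of $r^{\pm}$ identifies the $-$-shock curve as $u_{r}-u_{l}=-\operatorname{sign}(h_{r}-h_{l})\sqrt{(h_{l}^{-1}-h_{r}^{-1})(P_{r}-P_{l})}$ and the $+$-shock curve as the same with the opposite sign.

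Next I would translate Lax's conditions. Since $\lambda^{-}<\lambda^{0}=u<\lambda^{+}$ on $\Ucal$, a $-$-shock is admissible iff $\lambda^{-}(U_{r})<\xi_{0}<\lambda^{-}(U_{l})$ and $\xi_{0}<u_{r}$, and a $+$-shock iff $\lambda^{+}(U_{r})<\xi_{0}<\lambda^{+}(U_{l})$ and $\xi_{0}>u_{l}$. On the $-$-shock curve one reads off $m>0$ throughout (so $\xi_{0}<u_{r}$ is automatic), and, using $\xi_{0}=u_{l}-m/h_{l}=u_{r}-m/h_{r}$ and $\lambda^{\pm}(U_{k})=u_{k}\pm\sqrt{P'(h_{k})}$, the pair $\lambda^{-}(U_{r})<\xi_{0}<\lambda^{-}(U_{l})$ is equivalent to $h_{l}^{2}P'(h_{l})<m^{2}<h_{r}^{2}P'(h_{r})$, i.e., by $m^{2}=(P_{r}-P_{l})h_{l}h_{r}/(h_{r}-h_{l})$, to
\begin{equation}
\label{window-sketch}
\frac{h_{l}}{h_{r}}\,P'(h_{l})\;<\;\frac{P(h_{r})-P(h_{l})}{h_{r}-h_{l}}\;<\;\frac{h_{r}}{h_{l}}\,P'(h_{r}).
\end{equation}
When $h_{r}>h_{l}$ this is the ``window'' proved below, whereas when $h_{r}<h_{l}$ the first inequality in \eqref{window-sketch} becomes the reverse of an inequality that \eqref{window-sketch} (applied to the increasing pair $(h_{r},h_{l})$) guarantees, so it fails; hence the $-$-shock is admissible precisely for $h_{r}\ge h_{l}$, where $u_{r}-u_{l}=-\sqrt{(h_{l}^{-1}-h_{r}^{-1})(P_{r}-P_{l})}$. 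The $+$-shock curve carries $m<0$ throughout ($\xi_{0}>u_{l}$ automatic), and the identical computation shows it is admissible precisely for $h_{r}\le h_{l}$, again with $u_{r}-u_{l}=-\sqrt{(h_{l}^{-1}-h_{r}^{-1})(P_{r}-P_{l})}$. So the lemma reduces to establishing \eqref{window-sketch} for every $0<h_{l}<h_{r}$ and every $G\ge0$, $X,Z^{-1}>0$, $\zeta\in[0,\tfrac12]$.

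For that final step, both sides of \eqref{window-sketch} are linear in $P$, so I would verify it termwise for $P_{1}(h)=gh^{2}/2$, $P_{2}(h)=GZ^{-1}h^{3-2\zeta}$ and $P_{3}(h)=-GXh^{2\zeta-1}$. The terms $P_{1},P_{2}$ are strictly increasing and convex on $\R_{\star}^{+}$, so their secant slope lies between their endpoint derivatives, which — since $P_{i}'>0$ and $h_{l}/h_{r}<1<h_{r}/h_{l}$ — puts it strictly inside the window. The term $P_{3}$ is \emph{concave}; putting $\beta:=2\zeta-1\in[-1,0]$ and clearing denominators, each inequality of \eqref{window-sketch} for $P_{3}$ reduces to $\phi(\theta)\le0$, where $\theta$ is $h_{r}/h_{l}$ or $h_{l}/h_{r}$ and $\phi(\theta):=\theta^{\beta+1}-(1+\beta)\theta+\beta$; since $\phi(1)=0$ and $\phi'(\theta)=(1+\beta)(\theta^{\beta}-1)$ is $\ge0$ on $(0,1]$ and $\le0$ on $[1,\infty)$, $\phi$ attains its maximum $0$ at $\theta=1$, hence $\phi\le0$ throughout $\R_{\star}^{+}$. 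Summing the three contributions gives \eqref{window-sketch} strictly (the strictness coming from $P_{1}$), so the shocks listed in the lemma are exactly the Lax-admissible $\pm$-shocks; for weak such shocks the entropy dissipation $E\le0$ in \eqref{rhentropy} then follows from the standard equivalence of the Lax condition with entropy decrease in a genuinely nonlinear field.

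I expect the only genuinely delicate point to be this last one: because the normal-stress contribution $P_{3}=-GXh^{2\zeta-1}$ is concave (it is the $-\nu\partial_{x}u$ term that survives depth-averaging), $P$ need not be convex, so the classical ``convex pressure $\Rightarrow$ Lax $=$ entropy'' argument for shallow-water shocks is unavailable; one must instead exploit the specific exponent $2\zeta-1\le0$ — that is, the hypothesis $\zeta\le\tfrac12$ — through the sign analysis of $\phi$ to see that the concave part still fits inside the $h$-weighted window \eqref{window-sketch}. The remaining difficulties are purely bookkeeping: deciding which branch of \eqref{hugoniotlocus} is the $-$-field (vs.\ the $+$-field) curve via tangency to $r^{\pm}$, and tracking the direction of the inequalities under $h_{l}\leftrightarrow h_{r}$.
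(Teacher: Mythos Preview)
Your argument is correct, and it takes a genuinely different route from the paper. The paper does \emph{not} work with the Lax inequalities directly: it freezes $X,Z$ and shows that $F|_{X,Z}/h$ is convex in $(h^{-1},u)$ (equivalently, $F|_{X,Z}$ is convex in $(h,hu)$) by a two-line computation of $\partial_{h}(F|_{X,Z}/h)$ and $\partial_{h}^{2}(F|_{X,Z}/h)$, and then invokes the general fact (Bouchut \cite{bouchut-2004}) that for a $2\times2$ p-system this convexity is exactly what selects the compressive half of each Hugoniot branch. Your route instead reduces the $\pm$-Lax inequalities to the ``window'' estimate \eqref{window-sketch} on the effective pressure $P(h)$ and checks it term by term, the only nontrivial piece being the concave contribution $-GXh^{2\zeta-1}$, which you handle via the auxiliary function $\phi(\theta)=\theta^{\beta+1}-(1+\beta)\theta+\beta$ with $\beta=2\zeta-1\in[-1,0]$. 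The paper's proof is shorter and immediately ties the selection to the entropy structure; your proof is more explicit and, importantly, yields the Lax characterization for shocks of \emph{all} strengths, whereas the paper's formulation (``discriminate against non-physical (weak) shocks'') stops at small amplitude. Both arguments hinge on the same hypothesis $\zeta\le\tfrac12$: in the paper it makes the second $h$-derivative of $F|_{X,Z}/h$ nonnegative, in yours it forces $\beta\le0$ so that $\phi$ is maximized at $\theta=1$.
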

Indeed, it is enough that $F|_{X,Z}$ is convex in $(h,hu)$ to discriminate against non-physical (weak) shocks,
or equivalently, that $F|_{X,Z}/h$ is convex in $(h^{-1},u)$ \cite{bouchut-2004}.
\begin{proof} 
$
2 F/h 
= u^2 + gh +  G(\sigma_{xx}+\sigma_{zz}-\ln\sigma_{xx}-\ln\sigma_{zz}-2)
$
is convex in $(h^{-1},u)$ if 
$$
\partial^2_{h^{-2}} \frac{F|_{X,Z}}h = \frac2{h^{-3}} \partial_h \frac{F|_{X,Z}}h + \frac1{h^{-4}} \partial^2_{h^2} \frac{F|_{X,Z}}h
$$
is positive, which holds when $\zeta \in [0,1/2]$ such that
$$
2\partial_h \frac{F|_{X,Z}}h = g + G ( 2(1-\zeta) Z h^{2(1-\zeta)-1} + 2(\zeta-1) X  h^{2(\zeta-1)-1} ) \ge 0 \,,
$$
$$
2\partial^2_{h^2} \frac{F|_{X,Z}}h = G ( 2(1-\zeta)(2(1-\zeta)-1) Z h^{-2\zeta)} + 2(\zeta-1)(2(\zeta-1)-1) X  h^{2(\zeta-2)} ) \ge 0 \,.
$$ 
\end{proof}

\subsection{Rarefaction waves}
Elementary waves with two discontinuities ($M=2$) which are not a combination of two elementary waves with one discontinuity each shall be,
on noting $k\in\{l,r\}$,
\begin{itemize}
 \item a $+$-rarefaction wave if $h_l=h_0<h_r=h_2$ such that for all $\xi\in(\xi_0\equiv\lambda^+_l,\xi_2\equiv\lambda^+_r)$
{\small
\begin{eqnarray*}
\xi 
& = & \lambda^+_k + \int_{h_k}^{h_1(\xi)} dh
\frac
{ 3gh + (4\zeta^2-14\zeta+12)GZ^{-1} h^{2(1-\zeta)} + 2\zeta(1-2\zeta)GX h^{2(\zeta-1)} }
{ 2 h \sqrt{gh+(1+2(1-\zeta))GZ^{-1} h^{2(1-\zeta)}-(1+2(\zeta-1))GX h^{2(\zeta-1)}} }
\\
u_1(\xi)
& = & u_k + \int_{h_k}^{h_1(\xi)} dh \sqrt{ gh^{-1}+(1+2(1-\zeta))GZ^{-1} + h^{-2\zeta}-(1+2(\zeta-1))GX h^{2(\zeta-2)} } \,,
\end{eqnarray*}
}
 \item a $-$-rarefaction wave if $h_l=h_0>h_r=h_2$ such that for all $\xi\in(\xi_0\equiv\lambda^-_l,\xi_2\equiv\lambda^-_r)$
{\small
\begin{eqnarray*}
\xi & = & \lambda^-_k - \int_{h_k}^{h_1(\xi)} dh \frac
{ 3gh + (4\zeta^2-14\zeta+12)GZ^{-1} h^{2(1-\zeta)} + 2\zeta(1-2\zeta)GX h^{2(\zeta-1)} }
{ 2 h \sqrt{gh+(1+2(1-\zeta))GZ^{-1} h^{2(1-\zeta)}-(1+2(\zeta-1))GX h^{2(\zeta-1)}} }
\\
u_1(\xi) & = & u_k - \int_{h_k}^{h_1(\xi)} dh \sqrt{ gh^{-1}+(1+2(1-\zeta))GZ^{-1} h^{-2\zeta}-(1+2(\zeta-1))GX h^{2(\zeta-2)} } \,.
\end{eqnarray*}
}
\end{itemize}

\section{Solution to the general Riemann problem}
\sectionmark{Solution}
The general Riemann problem can be solved by combining elementary waves \cite{lax-1957}.
Solutions \eqref{solution} 
to systems with 3 characteristic fieds require 3 backward characteristics 
through all points in $t>0$,
except 
on discontinuities, that are:
$\xi_0\le\xi_1$ 
associated with 
the $-$-field,
$\xi_2 \in (\xi_1,\xi_3)$ associated with 
the $0$-field, and
$\xi_3\le\xi_4$ associated with 
the $+$-field.
%
So finally, 
a solution to the Riemann problem is characterized by 
\begin{equation}
\label{characterization}
X_l=X_1=X_2, \ Z_l=Z_1=Z_2 \quad u_2 = u_3, \ P_2 = P_3 \quad X_r=X_4=X_3, \ Z_r=Z_4=Z_3 
\end{equation}
with a $(h_2,u_2)$-locus given by
{\small
$
u_2 = u_l - \sqrt{(h_l^{-1}-h_2^{-1})(P_2-P_l)}
$}
when $h_2 \ge h_l$
and
{\small
$$
u_2 \leftarrow u_1(\xi) = u_l - \int_{h_l}^{h_1(\xi)} dh \sqrt{ gh^{-1}+(1+2(1-\zeta))GZ_l^{-1}h^{-2\zeta}-(1+2(\zeta-1))GX_lh^{2(\zeta-2)} }
$$
$$
\xi \ge \lambda_l - \int_{h_l}^{h_1(\xi)} dh
\frac
{ 3gh + (4\zeta^2-14\zeta+12)GZ_l^{-1}h^{2(1-\zeta)} + 2\zeta(1-2\zeta)GX_lh^{2(\zeta-1)} }
{ 2 h \sqrt{gh+(1+2(1-\zeta))GZ_l^{-1}h^{2(1-\zeta)}-(1+2(\zeta-1))GX_lh^{2(\zeta-1)}} }
$$
}\noindent
when $h_2 \leftarrow h_1(\xi)\le h_l$; a $(h_3,u_3)$-locus given by
{\small
$
u_3 = u_r + \sqrt{(h_r^{-1}-h_3^{-1})(P_3-P_r)}
$}
on the other hand when $h_3 \ge h_r$ and, when $h_3 \leftarrow h_4(\xi)\le h_r$,
{\small
$$
u_3 \leftarrow u_4(\xi) = u_r + \int_{h_r}^{h_4(\xi)} dh \sqrt{ gh^{-1}+(1+2(1-\zeta))GZ_r^{-1}h^{-2\zeta}-(1+2(\zeta-1))GX_rh^{2(\zeta-2)} }
$$
$$
\xi \le \lambda_r + \int_{h_r}^{h_4(\xi)} dh
\frac
{ 3gh + (4\zeta^2-14\zeta+12)GZ_r^{-1}h^{2(1-\zeta)} + 2\zeta(1-2\zeta)GX_rh^{2(\zeta-1)} }
{ 2 h \sqrt{gh+(1+2(1-\zeta))GZ_r^{-1}h^{2(1-\zeta)}-(1+2(\zeta-1))GX_rh^{2(\zeta-1)}} } \,.
$$
}
\begin{theorem}
Given $\xi\in\left[0,\frac12\right]$, $g>0$, $G>0$,
the Riemann problem for gSV admits a unique admissible weak solution in $\Ucal$ for all $U_l,U_r\in\Ucal$\;;
this solution is piecewise continuous and differentiable with at most $5$ discontinuity lines in $(x,t)\in\R\times\R^+$.
\end{theorem}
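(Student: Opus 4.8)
The plan is to reduce the $4\times 4$ Riemann problem to a scalar fixed-point problem for the intermediate height, exploiting the decoupling structure already visible in \eqref{characterization}. First I would observe that along any $\pm$-wave the quantities $X=\sigma_{xx}h^{2(1-\zeta)}$ and $Z^{-1}=\sigma_{zz}h^{2(\zeta-1)}$ are Riemann invariants, while across the $0$-wave $u$ and $P$ are Riemann invariants; hence $X_l,Z_l$ propagate unchanged from the left state up to the contact, and $X_r,Z_r$ from the right, so the only unknowns to determine are the intermediate $(h_2,u_2)$ (left of the contact) and $(h_3,u_3)$ (right of the contact), subject to $u_2=u_3=:u_*$ and $P(h_2;X_l,Z_l)=P(h_3;X_r,Z_r)=:P_*$. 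This is exactly the classical Saint-Venant reduction, but with the pressure function $P_k(h)=gh^2/2+GZ_k^{-1}h^{1+2(1-\zeta)}-GX_kh^{1+2(\zeta-1)}$ replacing $gh^2/2$.

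\textbf{Step 1: one-sided wave curves.} For each side $k\in\{l,r\}$ I would define the (forward for $k=l$, backward for $k=r$) Lax wave curve $h\mapsto u=\mathcal{W}_k(h)$ by glueing the Hugoniot branch \eqref{hugoniotlocus} (used when $h\ge h_l$ on the left, $h\ge h_r$ on the right, per the Lemma) to the rarefaction branch given by the integral formulas in Section on Rarefaction waves (used when $h\le h_k$). Using \eqref{dhp} one has $\partial_h P_k>0$ on $h>0$, so the quantity $(h_k^{-1}-h^{-1})(P_k(h)-P_k(h_k))$ under the square root in \eqref{hugoniotlocus} is nonnegative and the shock branch is well defined; the rarefaction integrands are real by the same positivity (the radicand equals $\partial_h P_k|_{X,Z}/h>0$). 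I would check that $\mathcal{W}_k\in C^1$ at the sonic point $h=h_k$ (matching of first derivatives is a standard computation, the second derivative jump being the genuine-nonlinearity condition \eqref{genuinelynonlinear}), that $\mathcal{W}_l$ is strictly decreasing and $\mathcal{W}_r$ strictly increasing in $h$, and — this is the crucial new feature compared to $G=0$ — that as $h\to 0^+$ the rarefaction integral $\int_{h_k}^{h}\!\sqrt{gh^{-1}+(1+2(1-\zeta))GZ_k^{-1}h^{-2\zeta}-\cdots}\,dh$ \emph{diverges} to $+\infty$ because of the $GX_k h^{2(\zeta-2)}$ term (exponent $2(\zeta-2)\le -3$, non-integrable), so $\mathcal{W}_l(h)\to+\infty$ and $\mathcal{W}_r(h)\to-\infty$ as $h\to0^+$; while as $h\to+\infty$, $\mathcal{W}_l(h)\to-\infty$ and $\mathcal{W}_r(h)\to+\infty$ from the shock branch since $\sqrt{(h_k^{-1}-h^{-1})(P_k(h)-P_k(h_k))}\to\infty$.

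\textbf{Step 2: intersection and the contact.} The pair of conditions $u_*=\mathcal{W}_l(h_2)=\mathcal{W}_r(h_3)$ and $P_l(h_2)=P_r(h_3)$ I would collapse to a single equation: since $P_k$ is strictly increasing I can invert and write $h_3=P_r^{-1}(P_l(h_2))=:\Phi(h_2)$, a continuous strictly increasing bijection of $(0,\infty)$ onto itself, so the problem becomes $\mathcal{W}_l(h_2)=\mathcal{W}_r(\Phi(h_2))$. The left side is continuous and strictly decreasing from $+\infty$ (at $h_2\to0^+$) to $-\infty$ (at $h_2\to+\infty$); the right side, $h_2\mapsto\mathcal{W}_r(\Phi(h_2))$, is continuous and strictly increasing from $-\infty$ to $+\infty$; hence there is exactly one crossing $h_2^*$, giving unique $(h_2,u_*,h_3)$, and then the intermediate extra-stresses are recovered by $\sigma_{xx,2}=X_l h_2^{-2(1-\zeta)}$, $\sigma_{zz,2}=Z_l^{-1}h_2^{2(\zeta-1)}$ and similarly on the right, all strictly positive, so the solution stays in $\Ucal$. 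Assembling: the solution consists of a $-$-wave (shock or rarefaction) from $U_l$ to $(h_2,u_*,\sigma_{\cdot,2})$, a $0$-contact (across which $u$ and $P$ are continuous but $X,Z$ jump from $(X_l,Z_l)$ to $(X_r,Z_r)$), and a $+$-wave from $(h_3,u_*,\sigma_{\cdot,3})$ to $U_r$; a shock contributes one discontinuity line, a rarefaction two (its head and tail), the contact one, so at most $2+1+2=5$ discontinuity lines, matching the claim. Lax admissibility of the $\pm$-shocks is guaranteed by the Lemma, the contact is admissible by linear degeneracy of the $0$-field, and rarefactions are entropy-compatible as equalities in \eqref{secondprinciple}; the weak (distributional) form of \eqref{conservative} holds because shocks satisfy \eqref{rh} and \eqref{eq:gsv9bis}–\eqref{eq:gsv10bis} in the homogeneous limit, while rarefactions and the contact are classical solutions where smooth. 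Finally, for the $G\to0$ consistency I would note $P_k(h)\to gh^2/2$ and $\Phi\to\mathrm{id}$, recovering the classical Saint-Venant wave curves, but only for data small enough that $h_2^*$ stays bounded — for large data the $G$-term is the only thing preventing $h_2^*\to\infty$, so the limit is singular, exactly as announced in Section~1.

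\textbf{Main obstacle.} The routine part is Step 1's calculus (monotonicity, $C^1$-matching, asymptotics); the genuinely delicate point is establishing \emph{strict} monotonicity of the composed curve $h_2\mapsto\mathcal{W}_r(\Phi(h_2))$ uniformly — i.e. ruling out that the nonmonotone-looking numerator $3gh+(4\zeta^2-14\zeta+12)GZ_k^{-1}h^{2(1-\zeta)}+2\zeta(1-2\zeta)GX_kh^{2(\zeta-1)}$ in the rarefaction speed changes sign, which would break the ordering $\xi_0\le\xi_1$ within a rarefaction fan and hence the piecewise structure. For $\zeta\in[0,\tfrac12]$ one has $4\zeta^2-14\zeta+12>0$ and $\zeta(1-2\zeta)\ge0$, so that numerator is strictly positive — this is precisely \eqref{genuinelynonlinear} — and the obstacle dissolves; but the argument must be run carefully for all $h>0$ and for the boundary cases $\zeta=0$ and $\zeta=\tfrac12$ separately, since the sign of individual exponents ($-2\zeta$, $2(\zeta-1)$) degenerates there. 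The second, more conceptual subtlety is the non-conservative character of \eqref{evolution}: I must make sure the weak solution constructed via these wave curves coincides with the distributional solution of the conservative system \eqref{conservative}, which is why the Hugoniot locus is imposed through the genuinely conservative relations \eqref{rankinehugoniot1}–\eqref{rankinehugoniot2} and \eqref{eq:gsv9bis}–\eqref{eq:gsv10bis} rather than any formal manipulation of \eqref{evolution}; verifying compatibility of these two sets of jump relations along each $\pm$-shock is the last thing I would check.
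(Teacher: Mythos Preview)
Your proposal is correct and follows essentially the same route as the paper: reduce via the Riemann invariants \eqref{characterization} to a single intersection of monotone wave curves and invoke the intermediate value theorem, using the divergence of the rarefaction integral at $h\to0^+$ (when $G>0$) for the lower asymptotic and the unbounded Hugoniot branch for the upper one. The only cosmetic difference is that you take $h_2$ (with $h_3=\Phi(h_2)=P_r^{-1}\!\circ P_l(h_2)$) as the running variable while the paper parametrizes directly by $P$ in the chart $(u,P,X,Z)$; since $\partial_hP>0$ by \eqref{dhp} these are equivalent, and your flag on the boundary case $\zeta=\tfrac12$ matches the paper's own parenthetical caveat.
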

\begin{tikzpicture}[scale=0.5]
\draw[-latex](-5,0) -- (5,0) node[below] {$x$};
\node at (-3,0.4) {$U_l \equiv U_0$};
\node at (3,0.4) {$U_5 \equiv U_r$};
\draw[dashed](0,0)--(30:5) node[above] {$\zeta_4$};
\node at(70:3){$U_3$};
\draw[dashed](0,0)--(60:5) node[above] {$\zeta_3$};
\draw(0,0)--(80:5) node[above] {$\zeta_2$};
\node at(105:3){$U_2$};
\draw[dotted](0,0)--(130:5) node[above] {$\zeta_1$};
\draw[dotted](0,0)--(145:5) node[above] {$\zeta_0$};
\end{tikzpicture}
\hspace{.5cm}
\begin{tikzpicture}[scale=0.25]
\draw[-latex](-5,0) -- (10,0) node[below]{$u$};
\draw[-latex](0,-5) -- (0,10) node[right]{$P$};
\draw plot [smooth] coordinates {(-5,10) (2,4) (8,-4)};
\draw plot [smooth] coordinates {(-3,-5) (1,-1) (3,4) (4,8)};
\node[above] at (4,8){$+$-shock};
\node[left] at (-3,-5){$+$-rarefaction};
\node[left] at (-5,10){$-$-shock};
\node[below] at (8,-4){$-$-rarefaction};
\end{tikzpicture}
%
\begin{proof}
It suffices to show that there exists one unique solution satisfying \eqref{characterization} for all $U_l,U_r\in\Ucal$.
%
Now, it holds $\partial_h P>0$ 
and 
one can use 
$(u,P,X
,Z)
\in\R\times\R\times\R^+_\star\times\R^+_\star
$ 
as parametrization of the state space $\Ucal$
(with $P\in\R^+_\star$ when $\xi=\frac12$), see figures above.
Moreover, $\partial_P u=(\partial_h P)^{-1}\partial_h u$ 
is negative along the $(h_2,u_2)$-locus, strictly except at $(h_l,u_l)$,
and positive along the $(h_3,u_3)$-locus, strictly except at $(h_r,u_r)$.
This is indeed easily established using $\partial_h u=(\partial_\zeta h)^{-1}\partial_\zeta u$ for rarefaction part; 
$\partial_h u=\pm\frac{P_*-P+h^2\partial_h P(h^{-1}-h^{-1}_*)}{2\sqrt{(h_*^{-1}-h^{-1})(P-P_*)}}$
for shock part, where $\partial_h P>0$ and 
$P$ is monotone increasing while $h^{-1}$ is monotone decreasing 
thus $P\ge P_*,h^{-1}\le h^{-1}_*$ when $h\ge h_*$ with $*=l/r$.

So finally, 
since $(u_3|_{X_r,Z_r}-u_2|_{X_l,Z_l})\to-\infty$ as $h=h_2=h_3\to0^+$ 
and 
$(u_3|_{X_r,Z_r}-u_2|_{X_l,Z_l})\to+\infty$ as $h=h_2=h_3\to+\infty$, 
there exists one, and only one, 
$P=P_2=P_3$ zero of the continuous 
strictly non-decreasing function $(u_3|_{X_r,Z_r}-u_2|_{X_l,Z_l})$.
\end{proof}

Note 
that 
it is not clear yet whether the unique Riemann solutions constructed above under Lax admissibility condition 
always satisfy the entropy dissipation \eqref{secondprinciple}.
Classically, this 
is ensured for weak shocks only, 
using the asymptotic expansion of the \emph{convex} entropy $F$ as usual (see e.g. \cite[Chap. VI]{lefloch-2002})
like in Saint-Venant case $G=0$ with small initial data.
Interestingly, the latter limit case can be recovered in the limit $G\to0^+$ also for small initial data only.

\begin{corollary}
When $G\to0^+$ one recovers the usual Riemann solution to the standard Saint-Venant system $G=0$
($\sigma_{xx},\sigma_{zz}$ then being 
``passive tracers'') only for initial data such that $U_l,U_r$ are close enough within $\Ucal$.
In particular, it is not possible to reach 
piecewise continuous and differentiable Riemann solutions with a vacuum state $h=0$ 
as the limits of bounded continuous sequences of Riemann solutions when $G>0$, 
as opposed to the standard Saint-Venant case $G=0$.
%
%
\end{corollary}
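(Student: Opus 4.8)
The plan is to reduce the statement to the behaviour, as $G\to0^+$, of the single scalar equation that fixes the intermediate states in the construction above. Parametrize the left wave curve by the intermediate height $h_2$ to the left of the $0$-contact: along the $(h_2,u_2)$-locus one has $u_2=u_2(h_2;G)$, strictly decreasing in $h_2$ (this is $\partial_P u<0$ along that locus, established in the proof of the theorem, together with $\partial_h P>0$); along the $(h_3,u_3)$-locus, $u_3=u_3(h_3;G)$, strictly increasing; and since $\partial_h P>0$ the relation $P_3(h_3;G)=P_2(h_2;G)$ defines a strictly increasing bijection $h_3=\pi_G(h_2)$ with $\pi_0=\mathrm{id}$. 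By \eqref{characterization}, an admissible Riemann solution is then entirely determined by the unique zero $h_2=h_2(G)$ of the continuous, strictly decreasing function
\begin{equation}
\label{cor:Xi}
\Xi_G(h):=u_2(h;G)-u_3\bigl(\pi_G(h);G\bigr),\qquad h\in\R^+_\star,
\end{equation}
together with the transported invariants $X\equiv X_l,Z\equiv Z_l$ to the left of the contact and $X\equiv X_r,Z\equiv Z_r$ to its right.

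First I would check that, as $G\to0^+$, all the ingredients converge locally uniformly on $\R^+_\star$ to their standard Saint-Venant counterparts: $P_k=gh_k^2/2+GZ^{-1}h_k^{1+2(1-\zeta)}-GXh_k^{1+2(\zeta-1)}\to gh_k^2/2$, the rarefaction integrands in the two loci converge to $\sqrt{g/h}$ and $\tfrac32\sqrt{g/h}$ uniformly on compacts of $\R^+_\star$, and \eqref{hugoniotlocus} tends to the Saint-Venant Hugoniot relation; hence $u_2(\cdot;G)\to u_2(\cdot;0)$, $u_3(\cdot;G)\to u_3(\cdot;0)$, $\pi_G\to\mathrm{id}$, and $\Xi_G\to\Xi_0$ locally uniformly on $\R^+_\star$, where $\Xi_0$ is the classical Saint-Venant wave-curve matching function --- continuous, strictly decreasing (the monotonicity argument in the proof of the theorem applies with $G=0$), with $\Xi_0\to-\infty$ as $h\to+\infty$ and
$$
\Xi_0(0^+)=u_l-u_r+2\sqrt{gh_l}+2\sqrt{gh_r}.
$$
I then call $U_l,U_r$ \emph{close enough} precisely when $\Xi_0(0^+)>0$, i.e. $u_r-u_l<2\sqrt{gh_l}+2\sqrt{gh_r}$: this is an open subset of $\Ucal\times\Ucal$ containing the diagonal $\{U_l=U_r\}$, and exactly the set of data for which the standard Saint-Venant Riemann solution has no vacuum.

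For such ``wet'' data $\Xi_0$ has a unique zero $h_2(0)\in\R^+_\star$ at which it changes sign, so by the intermediate value theorem and local uniform convergence $\Xi_G$ vanishes in every neighbourhood of $h_2(0)$ once $G$ is small; by uniqueness $h_2(G)\to h_2(0)$, then $h_3(G)=\pi_G(h_2(G))\to h_2(0)$, and all wave speeds ($\lambda^\pm$ at the endpoints and at the intermediate states, and the contact speed $u_2=u_3$) converge. The whole self-similar profile $\tilde U(\xi)$ therefore converges away from its limiting discontinuities, and $\sigma_{xx}=Xh^{2(\zeta-1)}$, $\sigma_{zz}=Z^{-1}h^{2(1-\zeta)}$ reconstructed from the transported $X,Z$ give exactly the ``passive tracer'' structure of the $G=0$ solution (the $\sigma$-jump sitting on the $0$-contact) --- this is the first assertion. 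Conversely, for ``dry'' data ($\Xi_0(0^+)\le0$) I would argue the convergence fails, and fails unboundedly: fix any $h_1>0$; since $\Xi_0$ is strictly decreasing, $\Xi_0(h_1)<\Xi_0(0^+)\le0$, hence $\Xi_G(h_1)<0$ for $G$ small, and since $\Xi_G$ is non-increasing its zero satisfies $h_2(G)<h_1$; as $h_1>0$ was arbitrary, $h_2(G)\to0^+$ and $h_3(G)=\pi_G(h_2(G))\to0^+$ as $G\to0^+$. But then the extra-stress just left of the contact blows up, $\sigma_{xx,2}(G)=X_l\,h_2(G)^{2(\zeta-1)}\to+\infty$, because $2(\zeta-1)<0$ and $X_l>0$ is fixed; hence the Riemann solution $U_G$ obeys $\NormLinf{U_G}\to+\infty$ and the family $\{U_G\}_{G>0}$ is unbounded as $G\to0^+$. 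No bounded limit can exist here, and in particular the Saint-Venant Riemann solution carrying a vacuum state $h=0$, attached to exactly such data, cannot be a limit of bounded Riemann solutions with $G>0$ --- the second assertion.

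The step I expect to be the main obstacle is the wave-curve convergence of the second paragraph: proving $\Xi_G\to\Xi_0$ locally uniformly on $\R^+_\star$ (uniform control of the rarefaction integrals on $h$-ranges bounded away from $0$, and of the $P_k$-dependent shock relation), and then upgrading it to ``the zero of $\Xi_G$ converges to the zero of $\Xi_0$'' in the wet case. Conceptually, however, the decisive point is the dry case: that the intermediate height genuinely collapses to $0$ as $G\to0^+$ --- which is what exhibits $G=0$ as a singular limit --- rests on the \emph{global} monotonicity of $\Xi_G$ from the theorem together with $\Xi_0$ being everywhere negative on dry data.
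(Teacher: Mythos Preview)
Your proposal is correct and considerably more detailed than the paper's own proof, which is essentially three lines: vacuum can only be reached through rarefactions; when $G=0$ this happens for large data; when $G>0$ the rarefaction integrals are not bounded as the intermediate height tends to zero, so no bounded sequence of $G>0$ solutions can approach a vacuum state.

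The substantive difference is the blow-up mechanism. The paper points to the divergence of the velocity integral along a rarefaction as $h\to0$ (the integrand picks up a term $\sim\sqrt{G(1-2\zeta)X}\,h^{\zeta-2}$). You instead show directly that, on dry data, $h_2(G)\to0$ as $G\to0^+$ via the monotonicity and locally uniform convergence of $\Xi_G$, and then exploit the transported invariant $X_l=\sigma_{xx}h^{2(1-\zeta)}$ to get $\sigma_{xx,2}=X_l\,h_2(G)^{2(\zeta-1)}\to+\infty$. This mechanism is independent of $G$ and hence cleaner in the singular limit $G\to0^+$ (where the velocity integrand's divergent piece carries a prefactor $\sqrt G$), and it covers the borderline $\zeta=\tfrac12$ where the $u$-integrand degenerates to $h^{-1/2}$ and the paper's divergence claim is less transparent. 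Your route also makes the first assertion of the corollary explicit --- the wet-data convergence via continuous dependence of the zero of $\Xi_G$ on $G$ --- which the paper leaves implicit. Conversely, the paper's argument is shorter and, read as a statement about fixed $G>0$, answers at once the broader question of whether \emph{any} bounded sequence of $G>0$ Riemann solutions can tend to a vacuum profile, without tracking $h_2(G)$.
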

\begin{proof}
The limit $h\to0$ can only be reached through rarefaction waves.
When $G=0$, this necessarily occurs for large initial data.
But when $G>0$, 
the integrals defining the rarefaction waves are not well-defined (bounded)
as $h_1\to0$ ($-$-field) or $h_4\to0$ ($+$-field),
so this cannot occur for bounded (continuous sequences of) solutions.
\end{proof}

\bibliographystyle{amsplain}

\providecommand{\bysame}{\leavevmode\hbox to3em{\hrulefill}\thinspace}
\providecommand{\MR}{\relax\ifhmode\unskip\space\fi MR }
\providecommand{\MRhref}[2]{%
  \href{http://www.ams.org/mathscinet-getitem?mr=#1}{#2}
}
\providecommand{\href}[2]{#2}
\begin{thebibliography}{}

\end{thebibliography}


\begin{thebibliography}{1}

\bibitem{bird-curtiss-armstrong-hassager-1987a}
R.~B. Bird, C.~F. Curtiss, R.~C. Armstrong, and O.~Hassager, \emph{Dynamics of
  polymeric liquids}, vol. 1: {F}luid {M}echanics, John Wiley \& Sons, New
  York, 1987.

\bibitem{bouchut-2004}
Fran{\c{c}}ois Bouchut, \emph{Nonlinear stability of finite volume methods for
  hyperbolic conservation laws and well-balanced schemes for sources},
  Frontiers in Mathematics, Birkh\"auser Verlag, Basel, 2004. \MR{MR2128209
  (2005m:65002)}

\bibitem{bouchut-boyaval-2013}
Fran{\c c}ois Bouchut and S{\'e}bastien Boyaval, \emph{A new model for shallow
  viscoelastic fluids}, M3AS \textbf{23} (2013), no.~08, 1479--1526.

\bibitem{bouchut-boyaval-2015}
Fran{\c c}ois Bouchut and S{\'e}bastien Boyaval, \emph{Unified derivation of
  thin-layer reduced models for shallow free-surface gravity flows of viscous
  fluids}, European Journal of Mechanics - B/Fluids \textbf{55, Part 1} (2016),
  116--131.

\bibitem{dafermos-2000}
C.~M. Dafermos, \emph{Hyperbolic conservation laws in continuum physics}, vol.
  GM 325, Springer-Verlag, Berlin, 2000.

\bibitem{lax-1957}
P.~D. Lax, \emph{Hyperbolic systems of conservation laws ii}, Communications on
  Pure and Applied Mathematics \textbf{10} (1957), no.~4, 537--566.

\bibitem{lefloch-2002}
Philippe~G. LeFloch, \emph{Hyperbolic systems of conservation laws}, Lectures
  in Mathematics ETH Z\"urich, Birkh\"auser Verlag, Basel, 2002, The theory of
  classical and nonclassical shock waves. \MR{1927887 (2003j:35209)}

\bibitem{picasso-2016}
Marco Picasso, \emph{From the free surface flow of a viscoelastic fluid towards
  the elastic deformation of a solid}, Comptes Rendus Mathématique
  \textbf{1195} (2016), no.~1, 1--92.

\end{thebibliography}

\providecommand{\bysame}{\leavevmode\hbox to3em{\hrulefill}\thinspace}
\providecommand{\MR}{\relax\ifhmode\unskip\space\fi MR }
\providecommand{\MRhref}[2]{%
  \href{http://www.ams.org/mathscinet-getitem?mr=#1}{#2}
}
\providecommand{\href}[2]{#2}

\end{document}